\def\beq{\begin{equation}}
\def\eeq{\end{equation}}
\def\ba{\begin{array}}
\def\ea{\end{array}}
\newtheorem{thm}{Theorem}[section]
\newtheorem{lm}[thm]{Lemma}
\newtheorem{prop}[thm]{Proposition}
\newtheorem{crl}[thm]{Corollary}
\theoremstyle{definition}
\theoremstyle{remark}
\numberwithin{equation}{section}
\begin{document}
\pagestyle{plain}  
\title{Prescribing integral curvature equation}


\author  {Meijun Zhu}
\address{Department of Mathematics, The University of Oklahoma, Norman, OK 73019, USA}


\begin{abstract}
In this paper we formulate  new curvature functions on $\mathbb{S}^n$ via integral operators. For certain even orders, these curvature functions are equivalent to the classic curvature functions defined via differential operators, but not for all even orders. Existence result for antipodally symmetric prescribed curvature functions on $\mathbb{S}^n$ is obtained. As a corollary, the existence of a conformal metric for an antipodally symmetric prescribed $Q-$curvature functions on $\mathbb{S}^3$ is proved. Curvature function on general compact manifold as well as the conformal covariance property for the corresponding integral operator are also addressed.

\end{abstract}

 \maketitle

\section{Formulation of the problem and the main results}
The main problem we will consider in this paper is the solvability of the following integral equation:
\begin{equation}
u(\xi)=\int_{\mathbb{S}^n}R(\eta)u^{\frac{n+\alpha}{n-\alpha}}(\eta)|\xi-\eta|^{\alpha-n}d\eta, \ \ u>0.\ \ \label{new1-1}
\end{equation}
where $\alpha \ne n$ is a given positive parameter, $R(\xi)$ is a given positive function,  and throughout the paper $|\xi-\eta|$ is denoted as the chordal distance from $\xi$ to $\eta$ in $\mathbb{R}^{n+1}$. In this paper, we mainly consider the case of $\alpha>n$.

For $n=1$ and $\alpha=2$, equation \eqref{new1-1} is equivalent to
\begin{equation}
u_{\theta \theta}+\frac 14 u=\frac{R(\theta)}{u^{3}}, \ \ u>0 \ \ \mbox{on} \ \ \mathbb{S}^1.\label{new1-1-1}
\end{equation}
The above equation was studied in Ni and Zhu \cite{NZ1} as one dimensional conformal curvature equation, following from the analytic extension of conformal Laplacian operators on $\mathbb{S}^n$. Further, if we let $v(\theta)=u(2 \theta)$, the above equation becomes the affine curvature equation for a symmetric convex two dimensional  body:
\begin{equation}
v_{\theta \theta}+v=\frac{\tilde R(\theta)}{ v^{3}},  \ \ v>0 \ \ \mbox{on} \ \ \mathbb{S}^1. \label{new1-2}
\end{equation}
In general, if $v(\theta)$ is a solution to equation \eqref{new1-2} and satisfies the following orthogonal condition
\begin{equation}
\int_0^{2 \pi} \frac{\tilde R(\theta)\cos \theta}{ v^{3}}=\int_0^{2 \pi} \frac{\tilde R(\theta) \sin \theta}{ v^{3}}=0, \label{ortho}
\end{equation}
then $v(\theta)$ is the supporting function of a two dimensional  convex body and $\tilde R(\theta)$ is the affine curvature of the boundary curve. When $\tilde R(\theta)$ is not a constant, any solution to equation \eqref{new1-2} automatically satisfies orthogonal condition \eqref{ortho}, but may not be $\pi-$periodic function. See the related work by Ai, Chou and Wei \cite{ACW2000},
where they discussed the existence of $\pi-$period solution to equation \eqref{new1-2} for $\pi-$periodic positive function $\tilde R(\theta)$; and a consequent paper by Jiang, Wang and Wei \cite{JWW2011}.  For sign-changed $\tilde R(\theta)$, the equation was studied in \cite{Chen2006}, \cite{Jiang2010} and \cite{DZ1}, in which the existence for $2\pi/k$ (for integer $k>1$) periodic function was obtained. The problem is also related to the $L_2$ Minkowski problem which was initially studied  by Lutwak \cite{Lu1}.


For $n=3$ and $\alpha=4$, equation \eqref{new1-1} is equivalent to so-called prescribing $Q-$curvature problem on $\mathbb{S}^3$:
\begin{equation}
(-\Delta)^2 u+\frac 12 \Delta u-\frac {15}{16} u=\frac{R(\xi)}{u^{7}}, \ \ u>0 \ \ \mbox{on} \ \ \mathbb{S}^3.\label{new1-1-2}
\end{equation}
Though the corresponding sharp Sobolev inequality for Paneitz operator $P_2 (u):=(-\Delta)^2 u+\frac 12 \Delta u-\frac {15}{16} u$ was obtained ten more years ago by Yang and Zhu \cite{YZ2004} (see, also Hang and Yang \cite{HY2004}), not too much progress in the study of  prescribing $Q-$curvature problem on $\mathbb{S}^3$ was made, except that it is known that similar Kazdan-Warner obstruction does exist, see, e.g. Xu \cite{Xu2007}. In particular, the existence of positive solution for an antipodally symmetric curvature function $R(\xi)$ is unknown, even though it is conjectured for a while that there shall be some similar existence results to those of Moser \cite{Moser1973} and Escorbar and Schoen \cite{ES1986}.

Analytically, one may even consider the extension of such study for general differential operators (for example: polynomial Laplacian operators) or pseudo-differential operators (for example: fractional Laplacian operators); and for high dimensional cases (for example: prescribing $Q-$curvature problem on $\mathbb{S}^3$). This motivation is also consistent with some recent studies on finding suitable conformal operators on general manifolds with the leading differential operators given by $(-\Delta)^{\frac {\alpha}2}$, see, e. g. \cite{GJMS1992}, \cite{GZ2003},  \cite{CG2011}, and references therein.

Curvature equations involving high order derivatives (including $Q-$ curvature equations) and  fully nonlinear curvature  equations (such as $\sigma_k$ operators of Schouten tensor) have been extensively studied in the past decade, see, e.g. \cite{CGY2002}, \cite{GV2003}, \cite{Br2003}, \cite{LL2005},  \cite{GV2007}, \cite{DM2008} and references therein.  All these differential operators, such as Paneitz operators with even powers and $\sigma_k$ operators of Schouten tensor, are defined pointwise.

Recently, there have been many interesting results concerning the fractional Yamabe problem, as well as the fractional
prescribing curvature problem, see, e.g. \cite{GZ2003}, \cite{GM2011}, \cite{GQ2011}, \cite{JLX2011a}-\cite{JX2011}
and references therein. In these studies the notion for the globally defined fractional Paneitz operator $P_\alpha$
is used  and has a direct link to singular integral operators (see Caffarelli and Silvestre \cite{CS2007} for a new view point of fractional Laplacian operator). We may view the differential operator $(-\Delta)^k$ on $\mathbb{R}^n$ as the even power ($\alpha=2k$) Paneitz operator, and hope it to produce a nice curvature function (e.g. for $k=1$, one can introduce the  scalar curvature for a conformal metric; for $k=2$,  one may introduce the so-called $Q$-curvature, see, e.g. \cite{Bra2004} ,  \cite{BG2008} and \cite{NZ1}.) However, the differential operator $(-\Delta)^k$sometimes has zero eigenvalue (for example,  $(-\Delta)^2$ has a zero eigenvalue on $\mathbb{R}^2$, see, e.g. Hang \cite{H2007} or the example below), which makes it impossible to introduce a reasonable curvature function via such a differential operator: Using such a differential operator, the $Q$-curvature of standard sphere $\mathbb{S}^2$ is zero. See more details in the following example.

\smallskip

\noindent{\bf 1.1. An example: the advantage for introducing new curvature functions}

\smallskip

Consider the standard bubbling function for operator $(-\Delta)^2$ on $\mathbb{R}^2$: $u_\epsilon=(\epsilon^2+|x|^2)/\epsilon$. One can easily check that it  is not a solution to
$$(-\Delta)^2 u =u^{-3}, \quad \, u>0 \quad \, \, \mbox{in} \quad \, \mathbb{R}^2.$$
Note that $g_\epsilon= u_\epsilon^{-2}d x^2$ is a complete metric for $\mathbb{S}^2$. If we introduced the $Q\text{-curvature}$ for $(\mathbb{S}^2, g_\epsilon)$ as before by defining $Q(x)= (-\Delta)^2 u_\epsilon /u_\epsilon^{-2}$, then $(\mathbb{S}^2, g_\epsilon)$ would have {\it zero} $Q-$curvature.

Then, is there any curvature equation that the bubbling function may associate with?


It turns out that, up to a constant multiplier, this bubbling function $u_\epsilon$ satisfies the following integral equation
\begin{equation}\label{3-5}
u(x)=\int_{\mathbb{R}^{2}}{|x-y|^{2}} {u^{-3}(y)}dy, \quad \, u>0 \quad  \, \mbox{in} \, \,\mathbb{R}^{2}.
\end{equation}

This observation leads us to seek for the integral form for equation (\ref{new1-1-1}).

\medskip

\noindent{\bf 1.2. Equivalent integral equation for equation (\ref{new1-1-1}) on $\mathbb{S}^1$}

\smallskip

The standard approach to solve equation (\ref{new1-1-1}) is to find critical points for quotient functional:
$$
E_1(u)=\frac{\int_{\mathbb{S}^1}(u_\theta^2-\frac 14 u^{2})dS}{\int_{\mathbb{S}^1}Ru^{-2}d S},
$$
Under the assumption that $R(\theta)$ is antipodally symmetric (i.e. $R(\theta)=R(\theta+\pi)$ on $\mathbb{S}^1),$ one can rule out the possibility that a minimizing sequence may vanish at certain point on $\mathbb{S}^1$, thus  obtain the existence of a positive solution,  see, for example, W. Chen \cite{Chen2006}.
 This approach seems to work only for second order equation on one dimensional circle.

 Here, we shall initiate a completely new approach. Observe that equation (\ref{new1-1-1}) is equivalent to the following integral equation:
$$
f^{-1/3}(\theta)=\int_{\mathbb{S}^1}R(\gamma)f(\gamma)|2\sin\frac{\theta-\gamma}2|^{\alpha-1}d\gamma, \ \ f>0.\
$$
For positive function $R(\xi)$, this equation may be solved via finding the critical point to a different quotient functional
$$
J_2(f)=\frac{\int_{\mathbb{S}^1}\int_{\mathbb{S}^1}(R(\xi)R(\eta)f(\xi)f(\eta)|2\sin\frac{\xi-\eta}2| d\xi d\eta }{(\int_{0}^{2 \pi}Rf^{2/3}d \xi)^{3/2}}.
$$

\medskip

 For general $\alpha>0$, we can introduce a new curvature function on a compact manifold conformally equivalent to the standard sphere via an integral form, which covers the above mentioned $Q-$curvtuare function as a special case.

 \smallskip

\noindent{\bf 1.3. Curvature functions and curvature equations}

\smallskip

%

 Let $(\mathbb{S}^n, g_0)$ be the standard sphere with the induced metric $g_0$ from $ \mathbb{R}^{n+1}$ and $\alpha \ne n$ be a positive parameter. For a given positive function $u\in C(\mathbb{S}^n)$, we define the new curvature function $Q_\alpha(\xi)\in L^1(\mathbb{S}^n)$ for metric $g=u^{\frac 4{n-\alpha}} g_0$  as a function implicitly given by
\begin{equation}\label{3-6}
u(\xi)= c_{n,\alpha}\int_{\mathbb{S}^{n}}{|\xi-\eta|^{\alpha-n}} {Q_\alpha (\eta)u^{\frac{n+\alpha}{n-\alpha}}(\eta)}dV_{g_0}:=\tilde I_\alpha ({Q_\alpha u^{\frac{n+\alpha}{n-\alpha}}}),
\end{equation}
where $|\xi-\eta|$ is the distance from point $\xi$ to point $\eta$ in $\mathbb{R}^{n+1}$,  $c_{n, \alpha}^{-1}=\int_{\mathbb{S}^n}|\xi-\eta|^{\alpha-n}dV_{g_0} =2^{\alpha-1}|\mathbb{S}^{n-1}|\frac{\Gamma(n/2)\Gamma(\alpha/2)}{\Gamma((n+\alpha)/2)}.$


This curvature function can also be defined on $\mathbb{R}^n$ via a stereographic projection. Let $\mathcal{S}:\, x \in \mathbb{R}^n\to \xi \in \mathbb{S}^n\backslash(0,0,\cdots,-1)$ be the inverse of the stereographic projection, defined by
\begin{eqnarray*}
\xi_j:=\frac{2\xi_j}{1+|\xi|^2},\quad\text{for}~j=1,2,\cdots,n;\quad \xi_{n+1}:=\frac{1-|x|^2}{1+|x|^2}.
\end{eqnarray*}
Using this stereographic projection, we can redefine the curvature function  on $\mathbb{R}^n$ as follows.  Let $u(\xi)=\big(\frac2{1+|x|^2}\big)^{\frac {n+\alpha}2}v(x),$  then
\begin{equation}\label{3-6-1}
v(x)= c_{n,\alpha}\int_{\mathbb{R}^{n}}{|x-y|^{\alpha-n}} {Q_\alpha (y)v^{\frac{n+\alpha}{n-\alpha}}(y)}dy:= I_\alpha ({Q_\alpha v^{\frac{n+\alpha}{n-\alpha}}}).
\end{equation}
Using Fourier transform, we can check  that  for $\alpha \in (0, n)$, equation \eqref{3-6-1} is generically equivalent to the following differential equation
\begin{equation}\label{3-6-2}
(-\Delta)^{\alpha /2} v={Q}_\alpha(\xi) v^{\frac{n+\alpha}{n-\alpha}}, \quad \, v>0 \quad  \ \mbox{on} \, \,\mathbb{R}^{n},
\end{equation}
for $v(x)$  with suitable decay rate at infinity, see, for example, Chen, Li and Ou \cite{CLO2005}.
 Thus, the new curvature $Q_\alpha(\xi)$ is uniquely determined by the metric $u^{4/(n-\alpha)}g_0$ for $\alpha\in(0, n)$.

 For $\alpha>n$, as we pointed out  by the above two dimensional example,  equation \eqref{3-6-1} may not be equivalent to equation \eqref{3-6-2}.  However, we can still introduce the same notion for curvature function $Q_\alpha(\xi)$ via equation \eqref{3-6} or \eqref{3-6-1}. In fact, the above curvature function is well defined for $\alpha-n \ne 2k$ since the inverse operator for $\tilde I_\alpha$ is well defined, see, for example, Pavlov and Samko
  \cite{PavlovS1984}), Rubin \cite{Rubin1992} and Samko \cite{Samko2003}.

 Similar to the classical Nirenberg's prescribing curvature problem, one may ask: {\it For which $Q_\alpha(x)$, does equation \eqref{3-6} have a positive solution?}

\medskip

%
%
%


\smallskip

\noindent{\bf 1.4. Existence for antipodally symmetric curvature function for $\alpha>n$}

\smallskip

There are two types of existence results concerning the prescribing curvature problem on $\mathbb{S}^n$ for $\alpha=2$ and $n \ge 2$ (the classic Nirenberg problem). For antipodally symmetric scalar curvature, the existences were obtained by Koutroufiotis \cite{Ko1972}, Moser \cite{Moser1973} (for $n=2$), by Escobar and Schoen (for $n=3$, and for $n\ge 4$ with curvature function also satisfying certain flatness conditions), and later by Chen \cite{Chen2006}, Jiang \cite{Jiang2010} and by Dou and Zhu \cite{DZ1} (for $n=1$); Under certain non-degenerate assumption, necessary conditions for the existence were found by  Chang and Yang \cite{CY1987}  (for $n=2$), Bahri and Coron \cite{BahriC1991} (for $n=3$), Li \cite{Li1995} (for $n\ge 4$),  and  by Ai, Chou and Wei \cite{ACW2000} (for  $n=1$).

For $\alpha=n$ and $n\ge 3$, similar prescribed curvature problem was studied by Wei and Xu \cite{WX1998}-\cite{WX2009}, where the integral equation is used to derive {\it a priori} estimates.
Such a prescribing curvature equation for $\alpha \in (0, 2)$ and $\alpha<n$  has been studied in recent papers by Q. Jin, Y.Y. Li and J. Xiong \cite{JLX2011a}-\cite{JLX2011b}, where they use the inverse (pseudo-differential) operator of $\tilde I_\alpha$ on $\mathbb{S}^n$ (due to the early work of Pavlov and Samko \cite{PavlovS1984}), and the existences for symmetric curvature function as well as for non-degenerate function were obtained.

For $\alpha<n$, all existence results for symmetric functions rely on a modern folklore theorem: two point blow up will generate large energy (principle of the concentration compactness).

  Here, we shall focus on the study of the integral operators, and the case $\alpha>n$. It was noted in Dou and Zhu \cite{DZ2} that for $\alpha>n$, the principle of the concentration compactness does not hold for energy functional $J_{\alpha, R} (f)$ (defined in next section). Nevertheless, in this paper, we will establish the existence for any positive antipodally symmetric function on $\mathbb{S}^n$ for any  $\alpha>n$.


\begin{thm} Assume $\alpha>n$. For any positive, continuous and antipodally symmetric function  $R(\xi)$,  there is  a positive  antipodally symmetric solution $u(\xi)\in C^{[\alpha]-n}(\mathbb{S}^n)$ to equation (\ref{new1-1}). \label{thm1-1}
\end{thm}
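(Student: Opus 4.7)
I would solve \eqref{new1-1} variationally, following the template sketched in \S1.2. With the substitution $f:=u^{(n+\alpha)/(n-\alpha)}$ and the abbreviation $q:=\tfrac{2n}{n+\alpha}\in(0,1)$, equation \eqref{new1-1} becomes
\begin{equation*}
 f^{(n-\alpha)/(n+\alpha)}(\xi)=\int_{\S^n}R(\eta)f(\eta)|\xi-\eta|^{\alpha-n}\,d\eta,\qquad f>0,
\end{equation*}
which (up to a positive multiplicative constant, absorbed by rescaling $f$) is the Euler--Lagrange equation of the scale-invariant quotient
\begin{equation*}
J(f)=\frac{E(f)}{G(f)^{(n+\alpha)/n}},\quad E(f):=\iint R(\xi)R(\eta)f(\xi)f(\eta)|\xi-\eta|^{\alpha-n}d\xi d\eta,\quad G(f):=\int_{\S^n}Rf^q d\xi,
\end{equation*}
on the cone $\mathcal X$ of positive antipodally symmetric functions. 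The goal is to attain $I:=\inf\{E(f):f\in\mathcal X,\ G(f)=1\}$.

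Antipodal invariance of both $R$ and $f$ symmetrizes the kernel to $\widetilde K(\xi,\eta)=\tfrac12(|\xi-\eta|^{\alpha-n}+|\xi+\eta|^{\alpha-n})$. The relation $|\xi-\eta|^2+|\xi+\eta|^2=4$ reduces the lower bound on $\widetilde K$ to an elementary one-variable minimization of $s^{(\alpha-n)/2}+t^{(\alpha-n)/2}$ on $s+t=4$, $s,t\ge 0$, yielding $\widetilde K\ge c_0(n,\alpha)>0$; hence $E(f)\ge c_0(\int Rf)^2$. Jensen's inequality for the concave $x\mapsto x^q$ gives $G(f)\le(\int R)^{1-q}(\int Rf)^q$, and combining the two yields $J(f)\ge\gamma_0>0$ on $\mathcal X$.

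For a minimizing sequence $\{f_k\}\subset\mathcal X$ with $G(f_k)=1$, coercivity forces $\|Rf_k\|_{L^1(\S^n)}\le C$, so a subsequence satisfies $Rf_k\,d\xi\rightharpoonup\nu=Rf\,d\xi+\nu_{\mathrm{sing}}$ weak-$*$ in Radon measures. A Brezis--Lieb-type splitting of $G$---together with the observation that a concentrating bubble of scale $\varepsilon$ contributes only $O(\varepsilon^{n(1-q)})\to 0$ to $G$ since $q<1$---delivers $G(f)=1$. Continuity of the kernel makes the potentials $h_k(\xi):=\int R(\eta)|\xi-\eta|^{\alpha-n}f_k(\eta)d\eta$ uniformly bounded and equicontinuous, and Arzel\`a--Ascoli upgrades this to uniform convergence, whence $E(f_k)\to\iint|\xi-\eta|^{\alpha-n}d\nu\,d\nu=:E(\nu)$. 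Decomposing
\begin{equation*}
E(\nu)=E(f)+2\int Rf(\xi)\int|\xi-\eta|^{\alpha-n}d\nu_{\mathrm{sing}}(\eta)\,d\xi+\iint|\xi-\eta|^{\alpha-n}d\nu_{\mathrm{sing}}\,d\nu_{\mathrm{sing}},
\end{equation*}
the antipodal symmetry of $\nu_{\mathrm{sing}}$ forces its atoms to occur in pairs at $\pm\xi_0$, making the last term dominate $2c_{\xi_0}^2\cdot 2^{\alpha-n}>0$ whenever $\nu_{\mathrm{sing}}\ne 0$. Since $G(f)=1$ makes $f$ admissible and the cross term is nonnegative, $\nu_{\mathrm{sing}}\ne 0$ would yield $E(\nu)>E(f)\ge I$, contradicting $E(\nu)=I$. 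Hence $\nu_{\mathrm{sing}}=0$ and $f$ is a minimizer.

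The Euler--Lagrange equation at $f$ reads $\lambda f^{(n-\alpha)/(n+\alpha)}(\xi)=\int R(\eta)f(\eta)|\xi-\eta|^{\alpha-n}d\eta$ for some $\lambda>0$; its right-hand side is strictly positive and continuous, so $f$ is bounded above and bounded away from $0$. Absorbing $\lambda$ into a rescaling, $u:=f^{(n-\alpha)/(n+\alpha)}$ provides a positive antipodally symmetric solution of \eqref{new1-1}, with $u\in C^{[\alpha]-n}(\S^n)$ by differentiating under the integral and using that $|\xi-\eta|^{\alpha-n}$ has $[\alpha]-n$ continuous derivatives in $\xi$ on $\S^n$. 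The genuinely delicate step is the compactness: for $\alpha>n$ the usual concentration--compactness principle fails for this energy (see \cite{DZ2}), because a lone concentrating bubble costs only $O(1)$ in both $E$ and $G^{(n+\alpha)/n}$. Antipodal symmetry is exactly what rescues the argument: it forbids a single bubble and forces concentration to come as an antipodal pair, whose forced cross-contribution $2^{\alpha-n}c_{\xi_0}c_{-\xi_0}$ strictly penalizes any singular part and so excludes $\nu_{\mathrm{sing}}\ne 0$ at the infimum.
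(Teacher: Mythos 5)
Your overall strategy --- minimize $E$ on the constraint set $\{G=1\}$, pass to a weak-$*$ limit measure $\nu=Rf\,d\xi+\nu_{\mathrm{sing}}$, and use the antipodal pairing of any concentration to penalize the singular part --- is genuinely different from the paper's argument, and much of it is fine (the elementary lower bound for $J$ via $|\xi-\eta|^2+|\xi+\eta|^2=4$ plus Jensen is a nice self-contained substitute for the paper's appeal to the reversed HLS inequality in its Lemma 2.3, and the passage $E(f_k)\to\iint|\xi-\eta|^{\alpha-n}d\nu\,d\nu$ is correct because the kernel is continuous when $\alpha>n$). The gap is the step ``a Brezis--Lieb-type splitting of $G$ \dots delivers $G(f)=1$.'' Brezis--Lieb requires almost-everywhere convergence $f_k\to f$, and you do not have it: all you have is an $L^1$ bound and weak-$*$ convergence of the measures $Rf_k\,d\xi$, and the density of the absolutely continuous part of the limit is in general not an a.e.\ limit of any subsequence (e.g.\ $f_k=1+\sin(2k\theta)$ on $\S^1$ is positive, antipodally symmetric, bounded, and has no a.e.\ convergent subsequence). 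The enemy you have not addressed is oscillation, not concentration: along an oscillating sequence $\lim\int Rf_k^{q}<\int Rf^{q}$ by strict concavity of $t\mapsto t^{q}$, so $G(f)=1$ can fail (with $G(f)>1$), and your subsequent inequality $E(f)\ge I$, which presupposes that $f$ itself lies on the constraint set, is then unjustified. (A smaller imprecision: $\nu_{\mathrm{sing}}$ need not have atoms; what is true, and suffices, is that $\iint|\xi-\eta|^{\alpha-n}d\nu_{\mathrm{sing}}d\nu_{\mathrm{sing}}>0$ for every nonzero antipodally symmetric $\nu_{\mathrm{sing}}$, because antipodal symmetry rules out a single Dirac mass.)

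The gap is repairable inside your framework. Since $t\mapsto t^{q}$ is concave and sublinear, one has the weak-$*$ \emph{upper} semicontinuity $\limsup_k\int Rf_k^{q}\,d\xi\le\int Rf^{q}\,d\xi$ whenever $Rf_k\,d\xi\rightharpoonup Rf\,d\xi+\nu_{\mathrm{sing}}$ with bounded mass; hence $G(f)\ge 1$ (in particular $f\not\equiv0$). Combined with $E(f)\le\iint|\xi-\eta|^{\alpha-n}d\nu\,d\nu=\lim_kE(f_k)=I$ (all three terms of your decomposition are nonnegative), this gives $J(f)\le I$, so $f$ is a minimizer after rescaling to restore $G=1$, and $\nu_{\mathrm{sing}}=0$ then follows a posteriori from the strict positivity of the cross and singular--singular terms. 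For comparison, the paper avoids measures entirely: antipodal symmetry places a fixed fraction of the mass in each of two antipodal caps, which gives a uniform positive lower bound on the potentials $I_{\alpha,R}f_k$, hence an $L^1$ bound and, by Arzel\`a--Ascoli, uniform convergence of the potentials; the reversed Hardy--Littlewood--Sobolev inequality of \cite{DZ2} applied to differences then upgrades this to strong $L^{2n/(n+\alpha)}$ convergence of $f_k$ itself, so no decomposition of a limit measure is ever needed. Either route closes the argument; as submitted, yours does not.
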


For $\alpha=2$ and $n=1$, the corresponding differential equation under a weak condition on positivity ($R(\xi)$ could be negative at some points),
was initially studied by Chen \cite{Chen2006}, and late by Jiang \cite{Jiang2010}, and Dou and Zhu \cite{DZ1},
where the existence results are obtained via the study of the minimizing sequence for energy functional $E_1(u)$. In all these early papers, the proofs rely on the size of first eigenvalue of the Lapalacian operator on $\mathbb{S}^1$. Since there is no equivalent differential form for equation \eqref{new1-1} for $\alpha \ne 2k$ and $n=1$, and the complication of the nodal set for higher order differential operators on higher dimensional sphere (such as $P_2(u)$ on $\mathbb{S}^3$), we need a completely different approach. In fact, we will obtain the existence of solution by showing the existence of minimizer to functional $J_{\alpha, R}$ defined below (thus, we need to assume the positivity of the curvature function $Q_\alpha$), which is motivated by and relies on our early results on the reversed Hardy-Littlewood-Sobolev ineqaulity \cite{DZ2}. As a simple corollary, we obtain the existence for the prescribing antipodally symmetric $Q-$ curvature.

\begin{crl}
For any positive, continuous and antipodally symmetric function  $R(\xi)$,  there is  a positive  antipodally symmetric solution $u(\xi)\in C^1(\mathbb{S}^3)$ to equation (\ref{new1-1-2}).
\label{crl1-1}
\end{crl}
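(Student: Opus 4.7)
\textbf{Plan for Corollary \ref{crl1-1}.} The strategy is to invoke Theorem \ref{thm1-1} with $n=3$, $\alpha=4$ and then identify the resulting integral equation \eqref{new1-1} with the $Q$-curvature equation \eqref{new1-1-2}.

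First, substituting $n=3$ and $\alpha=4$ into \eqref{new1-1} gives the exponent $(n+\alpha)/(n-\alpha) = -7$, the regularity index $[\alpha]-n = 1$, and the kernel power $\alpha-n=1$. The hypothesis of Theorem \ref{thm1-1} is exactly that $R$ is positive, continuous, and antipodally symmetric, so the theorem supplies a positive antipodally symmetric solution $u \in C^1(\mathbb{S}^3)$ of
\begin{equation}
u(\xi) = \int_{\mathbb{S}^3} R(\eta)\, u(\eta)^{-7}\, |\xi-\eta|\, d\eta, \label{planint}
\end{equation}
which already delivers the regularity claim.

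It remains to convert \eqref{planint} into \eqref{new1-1-2}. The key structural fact is that, up to a positive multiplicative constant, $|\xi-\eta|$ is the Green's function on $\mathbb{S}^3$ of the Paneitz operator $P_2 = (-\Delta)^2 + \tfrac12\Delta - \tfrac{15}{16}$. I would verify this by spectral decomposition into spherical harmonics. On $\mathbb{S}^3$, $-\Delta$ acts on a degree-$k$ spherical harmonic $Y_k$ by multiplication by $k(k+2)$, and a short factorisation gives
\[
P_2 Y_k = \bigl(k-\tfrac12\bigr)\bigl(k+\tfrac12\bigr)\bigl(k+\tfrac32\bigr)\bigl(k+\tfrac52\bigr)\, Y_k,
\]
which is strictly positive for every $k \ge 0$. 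On the other hand, since $|\xi-\eta|=2\sin(\theta/2)$ is zonal, the Funk--Hecke formula diagonalises the integral operator in \eqref{planint} on $\{Y_k\}$; the general eigenvalue formula for $\tilde I_\alpha$ from Pavlov--Samko \cite{PavlovS1984} (see also \cite{Rubin1992}, \cite{Samko2003}), valid whenever $\alpha-n$ is not a non-negative even integer (here $\alpha-n=1$), identifies its $k$-th eigenvalue as the reciprocal of the eigenvalue above, modulo a single constant depending only on the normalisation $c_{3,4}$.

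With this identification, applying $P_2$ to both sides of \eqref{planint} yields $P_2 u = c\, R\, u^{-7}$ for some positive constant $c$, which after absorbing $c$ into $R$ is precisely \eqref{new1-1-2}; standard elliptic bootstrap on $\mathbb{S}^3$ then upgrades $u$'s regularity to whatever smoothness $R$ permits. The main obstacle in this plan is the spectral inversion in the supercritical regime $\alpha > n$ --- the introduction explicitly flags that \eqref{3-6-1} and \eqref{3-6-2} need not agree in general for $\alpha>n$ --- so the heart of the argument is verifying that the reciprocal-eigenvalue identity continues to hold for the specific pair $(n,\alpha)=(3,4)$, which is exactly what the Pavlov--Samko inverse operator for $\tilde I_\alpha$ delivers under the condition $\alpha-n \ne 2k$.
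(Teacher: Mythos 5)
Your overall strategy --- apply Theorem \ref{thm1-1} with $(n,\alpha)=(3,4)$ and then identify the kernel $|\xi-\eta|$ with the Green's function of the Paneitz operator $P_2$ via Funk--Hecke --- is exactly the route the paper intends (the paper asserts the equivalence of \eqref{new1-1} and \eqref{new1-1-2} in Section~1 and then treats the corollary as immediate). The bookkeeping $(n+\alpha)/(n-\alpha)=-7$ and $[\alpha]-n=1$ is correct, and you are right that the only nontrivial point is the spectral inversion in the regime $\alpha>n$.

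However, there is a concrete error in the verification. Your own factorisation $P_2Y_k=(k-\tfrac12)(k+\tfrac12)(k+\tfrac32)(k+\tfrac52)Y_k$ gives $-\tfrac{15}{16}$ at $k=0$, so the spectrum is \emph{not} strictly positive: $P_2$ is negative on constants. Correspondingly, the Funk--Hecke eigenvalues $\mu_k$ of $f\mapsto\int_{\mathbb{S}^3}|\xi-\eta|f(\eta)\,dS_\eta$ satisfy $\mu_k\cdot(k-\tfrac12)(k+\tfrac12)(k+\tfrac32)(k+\tfrac52)=-8\pi$ for every $k$ (e.g.\ $\mu_0=\tfrac{128\pi}{15}$ against $\Lambda_0=-\tfrac{15}{16}$, while $\mu_1=-\tfrac{128\pi}{105}$ against $\Lambda_1=\tfrac{105}{16}$), i.e.\ $|\xi-\eta|=-8\pi\,G_{P_2}(\xi,\eta)$; the negativity of the Paneitz Green's function on $\mathbb{S}^3$ is precisely the phenomenon of \cite{HY2014-2}, which the paper cites. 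Hence applying $P_2$ to the integral equation yields $P_2u=-8\pi R\,u^{-7}$, and since the constant is negative you cannot ``absorb $c$ into $R$'' while keeping $R$ positive: the solution furnished by Theorem \ref{thm1-1} solves \eqref{new1-1-2} with $R$ replaced by $-8\pi R<0$, not with the given positive $R$. (A quick sanity check: for $R\equiv\mathrm{const}>0$ equation \eqref{new1-1-2} admits no positive constant solution, since $P_2$ applied to a constant is negative.) You should either carry the sign explicitly and observe that the standard convention writes the $Q$-curvature equation as $P_2u=-\tfrac12Qu^{-7}$, so that positive $Q$ does correspond to positive $R$ in \eqref{new1-1}, or flag the sign discrepancy in \eqref{new1-1-2} as stated. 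As written, the claim that all eigenvalues are positive --- and with it the final identification --- is where your argument breaks.
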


%
%
%

It is interesting to point out that no extra condition on $R(\xi)$ is needed in Theorem \ref{thm1-1} even for higher dimensional sphere (contrary to the result of Escobar and Schoen \cite{ES1986}). On the other hand,  except for the case of $\alpha=2$ and $n=1$, the problem on the existence for the sign-changing curvature function is still open.

At the end of this paper, we also introduce a curvature function on a general compact Riemannian manifold,
and prove the conformal covariance property for the corresponding integral operator. A general Yamabe problem
(with the classical Yamabe problem as its special case) is also proposed.


\section{Variational approach}


\subsection{Existence via minimizing energy} We first give the proof of Theorem \ref{thm1-1}.

From now on, we always assume that $\alpha>n$, and $R(\xi)$ is a positive and continuous function on $\mathbb{S}^n$.

Write $f(x)=u^{\frac{n+\alpha}{n-\alpha}}(x)$, equation \eqref{new1-1} can be written as
$$
f^{\frac{n-\alpha}{n+\alpha}}(\xi)=\int_{\mathbb{S}^n}R(\xi)f(\eta)|\xi-\eta|^{\alpha-n}dS_\eta, \ \ \ f>0,
$$
which is equivalent to the following symmetric form:
\begin{equation}
R(\xi)f^{\frac{n-\alpha}{n+\alpha}}(\xi)=\int_{\mathbb{S}^n}R(\xi)R(\eta)f(\eta)|\xi-\eta|^{\alpha-n}dS_\eta, \ \ f>0. \label{new2-2}
\end{equation}
Define the quotient functional
$$
J_{\alpha, R}(f):=\frac {H_{\alpha, R}(f, f)}{||f||^2_{L_{\alpha, R}^{2n/(n+\alpha)}}},$$
where
 $$H_{\alpha, R}(f, f):=\int_{\mathbb{S}^n}\int_{\mathbb{S}^n}R(\xi)R(\eta)f(\xi)f(\eta)|\xi-\eta|^{\alpha-n}dS_\xi dS_\eta,$$
 and $$ ||f||_{L_{\alpha, R}^{2n/(n+\alpha)}}=\big (\int_{\mathbb{S}^n}f^{\frac{2n}{n+\alpha}}(\xi)R(\xi)dS_\xi \big)^{(n+\alpha)/2n}.$$
Easy to check that, up to a constant multiplier,
 a critical point to $J_{\alpha, R}(f)$ in $L^1(\mathbb{S}^n)$ is a weak solution to \eqref{new2-2}: For any continuous function $\phi(\xi)\in C(\mathbb{S}^n),$
\begin{equation}
\int_{\mathbb{S}^n} f^{\frac{n-\alpha}{n+\alpha}}(\xi)R(\xi) \phi(\xi)dS_\xi= \int_{\mathbb{S}^n}\int_{\mathbb{S}^n}R(\xi)R(\eta)f(\eta)|\xi-\eta|^{\alpha-n}\phi(\xi)dS_\eta dS_\xi.
\label{weak}
\end{equation}


To prove Theorem \ref{thm1-1}, we only need to establish the following

\begin{prop}If $\alpha>n$ and $R(\xi)$ is a positive, continuous and antipodally symmetric (i.e. $R(\xi)=R(-\xi))$ function, then
$$\inf_{f\in L^{1}(\mathbb{S}^n), f>0, f(\xi)=f(-\xi)}J_{\alpha, R}(f)
$$
is attained by a positive, antipodally symmetric function $f_\circ \in C^{[\alpha]-n}(\mathbb{S}^n).$
\label{infimum}
\end{prop}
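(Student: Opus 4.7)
The plan is a direct variational argument: extract a weak-$*$ limit of a minimizing sequence and use antipodal symmetry to rule out concentration, which is the main obstruction in the reversed regime $\alpha>n$. Throughout set $p:=2n/(n+\alpha)<1$. First I would verify $0<S_\circ:=\inf J_{\alpha,R}<\infty$; the upper bound is trivial (take $f\equiv 1$), and the lower bound follows from the coercivity estimate
$$H_{\alpha,R}(f,f)\geq c\,\|f\|_{L^1(\mathbb{S}^n)}^2 \quad \text{for antipodally symmetric } f\geq 0,$$
whose elementary input is the identity $|\xi-\eta|^2+|\xi+\eta|^2=4$ on $\mathbb{S}^n\subset\mathbb{R}^{n+1}$; this forces $|\xi-\eta|^{\alpha-n}+|\xi+\eta|^{\alpha-n}\geq 2^{(\alpha-n)/2}$ for every antipodal pair $\{\eta,-\eta\}$, and pairing $\eta$ with $-\eta$ using $f(-\eta)=f(\eta)$ turns this into a uniform lower bound $\int f(\eta)|\xi-\eta|^{\alpha-n}\,dS_\eta\gtrsim \|f\|_{L^1}$. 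Combined with Jensen (which gives $\|f\|_{L^p_{\alpha,R}}\leq C\|f\|_{L^1}$ because $p<1$), this shows $S_\circ>0$.

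Next, take a minimizing sequence $\{f_k\}$ of positive, antipodally symmetric functions normalized by $\|f_k\|_{L^p_{\alpha,R}}=1$, so that $H_{\alpha,R}(f_k,f_k)\to S_\circ$. Coercivity bounds $\|f_k\|_{L^1}$ uniformly, so along a subsequence $f_k\,dS\rightharpoonup \mu$ weak-$*$ in Radon measures, with $\mu\geq 0$ antipodally symmetric. Crucially, since $\alpha>n$ the kernel $K(\xi,\eta):=R(\xi)R(\eta)|\xi-\eta|^{\alpha-n}$ is continuous on $\mathbb{S}^n\times \mathbb{S}^n$, so the numerator passes directly to the limit: $H_{\alpha,R}(f_k,f_k)\to H_{\alpha,R}(\mu,\mu)=S_\circ$.

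Decompose $\mu=g\,dS+\mu_s$. The H\"older-type estimate $\int_U f^pR\leq(\max R)|U|^{1-p}(\int_U f)^p$ (which uses $p<1$) shows that mass concentrated into a shrinking neighborhood of the support of $\mu_s$ contributes nothing to $\|f\|^p_{L^p_{\alpha,R}}$. Splitting $f_k=f_k'+f_k''$ with $f_k''$ absorbing that mass, one gets $\int(f_k'')^pR\to 0$, and Dunford--Pettis equi-integrability of $f_k'$ together with Fatou yields $\int g^pR=1$. Hence
$$S_\circ=H_{\alpha,R}(g,g)+2H_{\alpha,R}(g,\mu_s)+H_{\alpha,R}(\mu_s,\mu_s)\geq H_{\alpha,R}(g,g)=J_{\alpha,R}(g)\geq S_\circ,$$
forcing $H_{\alpha,R}(\mu_s,\mu_s)=0$. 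Since $K>0$ off the diagonal, $\mu_s$ must live on the diagonal and, by antipodal symmetry, at most on a single antipodal pair $\{\xi_0,-\xi_0\}$; but then $H_{\alpha,R}(\mu_s,\mu_s)=m^2R(\xi_0)^2\cdot 2^{\alpha-n-1}>0$ unless $m=0$, so $\mu_s=0$ and $f_\circ:=g$ is the desired antipodally symmetric minimizer. Positivity and regularity come from the weak Euler--Lagrange relation \eqref{weak}: up to a multiplier it reads $f_\circ^{(n-\alpha)/(n+\alpha)}=R(\xi)^{-1}\int K(\xi,\eta)f_\circ(\eta)\,dS_\eta$, whose right side is continuous and bounded; since $(n-\alpha)/(n+\alpha)<0$ this gives $f_\circ\geq c_1>0$, and bootstrapping the Riesz-type integral operator with kernel $|\xi-\eta|^{\alpha-n}$ delivers $f_\circ\in C^{[\alpha]-n}(\mathbb{S}^n)$.

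The hard step is ruling out the singular part. For $p<1$ the $L^p_{\alpha,R}$ functional is concave rather than convex, so the standard weak-lsc / concentration--compactness toolbox fails, as the introduction itself points out. What rescues the argument is an asymmetry peculiar to this class: concentrated mass is invisible to $\|\cdot\|_{L^p_{\alpha,R}}$ (since $p<1$), yet shows up with a strictly positive coefficient in $H_{\alpha,R}$ through the antipodal cross-term $K(\xi_0,-\xi_0)=R(\xi_0)^2\cdot 2^{\alpha-n}$, so antipodal symmetry makes any would-be concentration energetically forbidden and pins the minimizer to a genuine $L^1$ function.
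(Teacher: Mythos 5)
Your argument is correct in outline but follows a genuinely different route from the paper's. The paper proves coercivity by invoking the reversed Hardy--Littlewood--Sobolev inequality of Dou--Zhu, then gets an $L^1$ bound on the minimizing sequence through the two--antipodal--balls estimate \eqref{4-10-1}--\eqref{4-10-2}, and finally obtains \emph{strong} $L^{2n/(n+\alpha)}$ convergence by applying the reversed HLS inequality to differences $f_i-f_{i+m}$ after an Arzel\`a--Ascoli argument on $I_{\alpha,R}f_i$; Fatou then closes the argument. You instead replace the reversed HLS input by the elementary identity $|\xi-\eta|^2+|\xi+\eta|^2=4$, which for symmetric $f$ gives the pointwise bound $\int f(\eta)|\xi-\eta|^{\alpha-n}\,dS_\eta\gtrsim\|f\|_{L^1}$ in one line --- this simultaneously does the job of the paper's Lemma \ref{lm2-1} and of \eqref{4-10-1}--\eqref{4-10-3}, and is cleaner. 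You then work with weak-$*$ limits of measures, exploit the continuity of the kernel for $\alpha>n$ to pass to the limit in the numerator, and kill the singular part by combining $H_{\alpha,R}(\mu_s,\mu_s)=0$, positivity of the kernel off the diagonal, and antipodal symmetry. What the paper's route buys is strong convergence of the minimizing sequence (hence no singular part ever appears); what your route buys is independence from the reversed HLS inequality and a transparent identification of where antipodal symmetry enters (concentration would necessarily produce an antipodal pair, whose cross-term $K(\xi_0,-\xi_0)=R(\xi_0)^2 2^{\alpha-n}>0$ is energetically visible while being invisible to the $L^p$ quasi-norm, $p<1$).

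One step of yours needs a more careful justification: the inequality $\int g^{p}R\,dS\ge\limsup_k\int f_k^{p}R\,dS=1$ for $\mu=g\,dS+\mu_s$. Your argument via ``a shrinking neighborhood of the support of $\mu_s$'' plus Dunford--Pettis does not quite work as stated, because a singular measure may have full topological support, and the absolutely continuous part $g$ may be approached by $f_k$ in a non-equi-integrable way (e.g.\ by spreading spikes), so no splitting $f_k=f_k'+f_k''$ of the kind you describe need exist. The claim itself is true: it is the weak-$*$ upper semicontinuity of $\nu\mapsto\int\psi\bigl(d\nu_{ac}/dS\bigr)R\,dS$ for the concave, sublinearly growing $\psi(t)=t^{p}$, $p<1$ (equivalently, the Goffman--Serrin/Reshetnyak lower semicontinuity theorem applied to the convex integrand $-t^{p}$, whose recession function vanishes, so the singular part contributes nothing). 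With that substitution the rest of your proof, including the positivity and $C^{[\alpha]-n}$ regularity read off from the Euler--Lagrange relation \eqref{weak}, goes through.
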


We first show that the infimum is a positive value.

\smallskip

\begin{lm} Let $\alpha>n$. If $R(\xi)\in C(\mathbb{S}^n)$, then there is a positive constant $C_1$, depending on the minimum value of $R(\xi)$, such that $\inf_{f\in L^{\frac{2n}{n+\alpha}}(\mathbb{S}^n), f>0}J_{\alpha, R}(f) \ge C_1$.
\label{lm2-1}
\end{lm}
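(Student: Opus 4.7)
The plan is to derive the positive lower bound on $J_{\alpha, R}$ as a direct consequence of the reversed Hardy--Littlewood--Sobolev (HLS) inequality on the sphere from \cite{DZ2}, by sandwiching the weight $R$ between its extrema on the compact sphere.

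First I would set $p := 2n/(n+\alpha)$. Because $\alpha > n$, one has $p \in (0,1)$; the exponent outside the integral in the weighted norm is $2/p = (n+\alpha)/n$, and the kernel $|\xi - \eta|^{\alpha - n}$ is bounded and continuous on $\mathbb{S}^n \times \mathbb{S}^n$. Since $R$ is continuous and strictly positive on the compact sphere, both $R_{\min} := \min_{\mathbb{S}^n} R > 0$ and $R_{\max} := \max_{\mathbb{S}^n} R < \infty$ are available.

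The main input is the spherical reversed HLS inequality from \cite{DZ2}: there exists a constant $C_0 = C_0(n,\alpha) > 0$ such that for every nonnegative $f \in L^p(\mathbb{S}^n)$,
\[
\int_{\mathbb{S}^n}\int_{\mathbb{S}^n} f(\xi) f(\eta) |\xi - \eta|^{\alpha - n} \, dS_\xi dS_\eta \;\ge\; C_0 \left(\int_{\mathbb{S}^n} f^p \, dS\right)^{2/p}.
\]
Granted this, the lemma is almost immediate. Bounding $R(\xi)R(\eta) \ge R_{\min}^2$ in the numerator of $J_{\alpha, R}$ gives $H_{\alpha, R}(f,f) \ge R_{\min}^2 C_0 \,\|f\|_{L^p}^2$. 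For the denominator, $R(\xi) \le R_{\max}$ yields $\int f^p R \, dS \le R_{\max} \int f^p\, dS$, and since $2/p > 0$ the map $t \mapsto t^{2/p}$ is increasing on $[0, \infty)$, so
\[
\|f\|_{L_{\alpha, R}^{p}}^2 \;\le\; R_{\max}^{2/p} \,\|f\|_{L^p}^2.
\]
Taking the ratio produces $J_{\alpha, R}(f) \ge C_0 R_{\min}^2 / R_{\max}^{2/p}$, independent of $f$, giving the claimed constant $C_1$.

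All substantive content of the lemma resides in the reversed HLS inequality itself, which the author has quoted from \cite{DZ2}; incorporating the positive continuous weight $R$ is elementary, and I do not foresee a real obstacle here. Note that the antipodal symmetry of $R$ plays no role in this lower bound -- it will only enter in Proposition \ref{infimum}, to prevent one--point concentration of minimizing sequences, which is exactly the regime in which the reversed HLS fails a concentration--compactness principle.
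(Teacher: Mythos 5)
Your proof is correct and follows essentially the same route as the paper: quote the spherical reversed Hardy--Littlewood--Sobolev inequality from \cite{DZ2} and bound the weight $R$ by its extrema on the compact sphere. In fact you are slightly more careful than the paper's own one-line argument, which only inserts $[\min R]^2$ in the numerator and silently drops the factor $R_{\max}^{2/p}$ coming from the weighted norm in the denominator; your constant $C_0 R_{\min}^2/R_{\max}^{2/p}$ is the honest one (and shows the constant really depends on $\max R$ as well as $\min R$, not just the minimum as the lemma's statement suggests).
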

\begin{proof}
 Recall that  the following reversed Hardy-Littlewood-Sobolev inequality holds for any positive $f\in L^{\frac{2n}{n+\alpha}}(\mathbb{S}^n)$ (see \cite{DZ2}):
$$
\int_{\mathbb{S}^n}\int_{\mathbb{S}^n}f(\xi)f(\eta)|x-y|^{\alpha-n}dS_\xi dS_\eta \ge C_2 ||f||^2_{L^{\frac{2n}{n+\alpha}}(\mathbb{S}^n)}
$$
for some universal constant $C_2>0$.
Thus
$$
J_{\alpha, R}(f) \ge [\min_{\xi\in \mathbb{S}^n} R(\xi)]^2 \cdot C_2:=C_1.
$$

\end{proof}

We are now ready to prove Proposition \ref{infimum} (the infimum is attained). The proof is along the line of our proof for the sharp reversed Hardy-Littlewood-Sobolev inequality in \cite{DZ2}.

Let $\{f_j\}_{j=1}^\infty \in L^1$  be a positive minimizing sequence with $\|f_j\|_{L^{2n/(n+\alpha)}}=1.$
We also need the following density lemma.

\begin{lm}
{\bf (Density Lemma)} Let $F(\theta) \in  L^1( \mathbb{S}^n)$ be a nonnegative, antipodally symmetric function with $\|F\|_{L^{2n/(n+\alpha)}( \mathbb{S}^n)}=1$. For any $\epsilon>0$, there is a nonnegative, antipodally symmetric function $G(\xi)\in C^0( \mathbb{S}^n)$, such that
$$
||F-G||_{L^{2n/(n+\alpha)}( \mathbb{S}^n)}+\big{|}H_{\alpha, R}(F, F)- H_{\alpha, R}(G, G)\big {|}<\epsilon.
$$
\label{lm2-2}
\end{lm}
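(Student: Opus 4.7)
The plan is to reduce the lemma to a standard density-plus-symmetrization argument, leveraging the fact that $\alpha>n$ makes the kernel $|\xi-\eta|^{\alpha-n}$ bounded on $\mathbb{S}^n$, so the quadratic form $H_{\alpha,R}$ is Lipschitz in $L^1$. The key observation is that once we have $L^1$-closeness, both required smallness statements follow, because on the finite measure space $\mathbb{S}^n$ the exponent $2n/(n+\alpha)\le 1$ (since $\alpha\ge n$) means the $L^{2n/(n+\alpha)}$ quasi-norm is controlled by the $L^1$ norm via H\"older.

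First, I would approximate $F$ by a nonnegative continuous function $G_1$ in $L^1(\mathbb{S}^n)$. Since $F\in L^1(\mathbb{S}^n)$ and $F\ge 0$, standard mollification on $\mathbb{S}^n$ (say, via the heat semigroup $e^{t\Delta_{g_0}}F$, which is smooth, nonnegative, and converges to $F$ in $L^1$ as $t\to 0^+$) produces a nonnegative $G_1\in C^0(\mathbb{S}^n)$ with $\|F-G_1\|_{L^1(\mathbb{S}^n)}<\delta$ for any prescribed $\delta>0$.

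Next, I would antipodally symmetrize: define
$$G(\xi):=\tfrac{1}{2}\bigl(G_1(\xi)+G_1(-\xi)\bigr).$$
Then $G$ is nonnegative, continuous, and antipodally symmetric. Using the antipodal invariance of $F$ and of the surface measure $dS$, the triangle inequality gives
$$\int_{\mathbb{S}^n}|F-G|\,dS\le \tfrac12\int_{\mathbb{S}^n}|F(\xi)-G_1(\xi)|\,dS_\xi+\tfrac12\int_{\mathbb{S}^n}|F(-\xi)-G_1(-\xi)|\,dS_\xi=\|F-G_1\|_{L^1}<\delta.$$

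Finally, I would convert $L^1$-closeness into the two bounds in the lemma. By H\"older on the finite-measure space $(\mathbb{S}^n,dS)$,
$$\|F-G\|_{L^{2n/(n+\alpha)}(\mathbb{S}^n)}\le |\mathbb{S}^n|^{(\alpha-n)/(2n)}\|F-G\|_{L^1(\mathbb{S}^n)}\le C\,\delta.$$
For the quadratic form, bilinearity and symmetry of $H_{\alpha,R}$ give
$$H_{\alpha,R}(F,F)-H_{\alpha,R}(G,G)=H_{\alpha,R}(F-G,F+G),$$
and since $|\xi-\eta|^{\alpha-n}\le 2^{\alpha-n}$ on $\mathbb{S}^n$ for $\alpha>n$,
$$|H_{\alpha,R}(F,F)-H_{\alpha,R}(G,G)|\le 2^{\alpha-n}\|R\|_{L^\infty}^2\,\|F-G\|_{L^1}\bigl(\|F\|_{L^1}+\|G\|_{L^1}\bigr)\le C'\delta\bigl(2\|F\|_{L^1}+\delta\bigr),$$
using $\|G\|_{L^1}\le\|F\|_{L^1}+\delta$. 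Choosing $\delta$ small enough depending on $\epsilon$, $\|F\|_{L^1}$, $\|R\|_{L^\infty}$, $\alpha$, and $n$, both summands drop below $\epsilon/2$, completing the proof.

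The only subtlety worth noting is the quasi-norm nature of $L^{2n/(n+\alpha)}$ when $\alpha>n$; this causes no trouble because on $\mathbb{S}^n$ (finite measure) the inclusion $L^1\hookrightarrow L^{2n/(n+\alpha)}$ is continuous, and boundedness of the kernel means the $H_{\alpha,R}$ piece never requires anything sharper than $L^1$ control. Thus the argument is essentially bookkeeping; no concentration-compactness or refined estimate is needed here.
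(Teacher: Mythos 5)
Your proof is correct and follows essentially the same route as the paper: approximate $F$ in $L^1$ by continuous nonnegative antipodally symmetric functions and use the boundedness of the kernel $|\xi-\eta|^{\alpha-n}$ (for $\alpha>n$) together with the embedding $L^1\hookrightarrow L^{2n/(n+\alpha)}$ on the finite measure space. In fact you supply two details the paper glosses over --- the explicit construction of the symmetric continuous approximants via mollification plus antipodal averaging, and the correct bilinear expansion $H_{\alpha,R}(F,F)-H_{\alpha,R}(G,G)=H_{\alpha,R}(F-G,F+G)$, whereas the paper's displayed estimate only bounds the term $H_{\alpha,R}(F-G,F-G)$ and omits the cross terms (though the conclusion is unaffected).
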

\begin{proof}  Let $\{G_i\}_{i=1}^\infty$ be a sequence of nonnegative, antipodally symmetric and continuous functions such that $||G_i-F||_{L^1( \mathbb{S}^n)} \to 0$ as $i\to \infty$. Then, for any $\xi\in \mathbb{S}^n,$ as $i\to \infty$,
\begin{eqnarray*}
\big{|}H_{\alpha, R}(F, F)- H_{\alpha, R}(G, G)\big {|}   & \le& C \int_{\mathbb{S}^n} \int_{\mathbb{S}^n} |G_i(\eta)-F(\eta)|\cdot |G_i(\xi)-F(\xi)|d\eta d \xi \\
&\le& C \big \{\int_{\mathbb{S}^n} |G_i(\eta)-F(\eta)| d\eta \big \}^2\to 0.
\end{eqnarray*}
Lemma \ref{lm2-2} immediately follows from the above.
\end{proof}

Due to Lemma \ref{lm2-2}, we can further assume $f_j  \in C (\mathbb{S}^n)$.

Note that $\|f_j\|_{L^{2n/(n+\alpha)}}=1$ and $f_j$ is antipodally symmetric, up to a subsequence, we know, without loss of generality, that there is a small positive number $\delta_0$ such that
\begin{equation}
\int_{\mathbb{S}^n \cap B_{\delta_0}(\mathbb{N})} f_j dy\ge 1/100, \ \ \ \int_{\mathbb{S}^n \cap B_{\delta_0}(\mathbb{S})} f_j dy\ge 1/100,
\label{4-10-1}
\end{equation}
where $B_{\delta_0}(\mathbb{N}), \ B_{\delta_0}(\mathbb{S})$ represent the geodesic ball centered at the north pole and at the south pole, respectively.

Denote
\begin{equation}
I_{\alpha, R}f(\xi)=\int_{\mathbb{S}^n}R(\eta)f(\eta)|\xi-\eta|^{\alpha-n}dS_\eta.
\label{I-int}
\end{equation}
We then know, due to \eqref{4-10-1} that there is a universal positive constant $C_4>0$, such that
\begin{equation}
I_{\alpha, R}f_i(\xi) \ge C_4 \ \ \ \ \mbox{for \ \ \ all} \ \ \ \xi \in \mathbb{S}^n.
\label{4-10-2}
\end{equation}
On the other hand, if $meas\{\xi\in \mathbb{S}^n \ : I_{\alpha, R}f_i(\xi) \to \infty \ \mbox{as} \  i\to \infty\}=vol(\mathbb{S}^n),$ then we have, using \eqref{4-10-1}, that $H_{\alpha, R}(f_i, f_i) \to \infty$, which contradicts the assumption that $f_i$ is a minimizing sequence. Thus $I_{\alpha, R}f_i(\xi)$ stays uniformly bounded in a set with positive measure. This implies: there is a constant $C_5>0$, such that
\begin{equation}
\int_{\mathbb{S}^n}f_i(\xi) dS_\xi \le C_5.
\label{4-10-3}
\end{equation}
From \eqref{4-10-3} we know that sequence $\{I_{\alpha, R}f_i(\xi)\}_{i=1}^\infty$ is uniformly bounded and equiv-continuous on $\mathbb{S}^n$. Up to a subsequence, $I_{\alpha, R}f_i(x) \to L(x)\in C(\mathbb{S}^n)$.
Using Fatou Lemma and the reversed Hardy-Littlewood-Sobolev inequality (see Dou and Zhu \cite{DZ2}), we have, up to a further subsequence, for $m\in \mathbb{N}$, that
\begin{eqnarray*}
0&\ge&\big(\lim_{i\to \infty}\int_{\mathbb{S}^n}| I_{\alpha, R}f_i-I_{\alpha, R}f_{i+m}|^{2n/(n-\alpha)}  \big)^{(n-\alpha)/2n}\\
&\ge& C (\lim_{i\to \infty}||f_i-f_{i+m}||^{2n/(n-\alpha)}_{L^{2n/(n+\alpha)}})^{(n-\alpha)/2n}.
\end{eqnarray*}
Thus $||f_i-f_{i+m}||_{L^{2n/n+\alpha)}}\to 0$. This implies $||f_i-f_\circ||_{L^{2n/(n+\alpha)}}\to 0$ for some $f_\circ$ with $||f_\circ||_{L^{2n/(n+\alpha)}}=1.$ Thus,
 up to a further subsequence, $f_i \to f_\circ \ge 0$ almost everywhere, and $f_\circ(\xi)=f_\circ(-\xi).$
It follows, via Fatou Lemma, that $\lim_{j\to \infty}H_{\alpha, R}(f, f)
\ge H_{\alpha, R}(f_\circ, f_\circ).$
Thus the infimum  is achieved by $f_\circ \ge 0$. From \eqref{4-10-3}, we know that $f_\circ \in L^1(\mathbb{S}^n)$.  Thus $I_{\alpha, R}f_\circ(\xi) \le C<\infty.$ It follows that $f_\circ>0$, and up to a constant multiplier, $f_\circ$ satisfies \eqref{weak}. This yields:
$$f_\circ^{\frac{n-\alpha}{n+\alpha}}(\xi)=\int_{\mathbb{S}^n}R(\xi)f_\circ(\eta)|\xi-\eta|^{\alpha-n}dS_\eta \, \, \mbox{ a.e. \  on } \ \mathbb{S}^n.
$$
 $f_\circ$ is in $C^{[\alpha]-n} (\mathbb{S}^n).$

The proof of Proposition \ref{infimum} is completed.

%

\subsection{Conformal curvature and covariance}

For $n=1$ and $\alpha =2$, the analog ``scalar" curvature under a new metric $g=v^{-4}g_0$ on standard sphere $(\mathbb{S}^1, g_0=d \theta \otimes  d\theta)$ was defined in Ni and Zhu \cite{NZ1}:
$$
R_g=v^3(4 v_{\theta\theta}+v).
$$
Thus $R_{g_0}=1.$
The corresponding {conformal Lapalace-Beltrami}
operator of $g$ is then defined by
$$
L_g=4\Delta_g+R_g.
$$
Then the conformal covariance property for $L_g$  was proved \cite{NZ1}: For $\varphi>0$, if $g_2=\varphi^{-4}g_1$ then
$R_{g_2}=\varphi^3 L_{g_1}\varphi$, and
\begin{equation}
L_{g_2}(\psi)=\varphi^3 L_{g_1}(\psi\varphi),
\quad\forall \psi\in C^2({\mathbb{S}^1}).
\label{4-23-1}
\end{equation}
A similar conformal operator involving fourth order derivative (corresponding to $\alpha=4$ in this paper) was also introduced in \cite{NZ1}.

Notice that inverse operator of $L_{g_0}$ on $\mathbb{S}^1$ in fact is $\tilde I_2$ (see definition \eqref{3-6}). If we write
$$\tilde I_{\mathbb{R}^n, g_0, \alpha} (u)= c_{n,\alpha}\int_{\mathbb{R}^{n}}{|x-y|^{\alpha-n} {u}(y)}dy
$$
and, for $g_1=(\frac 2{1+|x|^2})^2 g_0=\phi^{4/(n-\alpha)}g_0$,
$$\tilde I_{\mathbb{S}^n, g_1, \alpha} (u)= c_{n,\alpha}\int_{\mathbb{S}^{n}}{|\xi-\eta|^{\alpha-n} {u}(\eta)}dV_{g_1},
$$
then covariance relation \eqref{4-23-1} implies, for $\alpha=2$ and $n=1$, that,
$$
\tilde I_{\mathbb{S}^1, g_1, 2} (u)= \phi^{-1} \tilde I_{\mathbb{S}^1, g_0, 2} (\phi^{-1} \cdot u)
\quad\forall u \in C^2({\mathbb{S}^1}).
$$

It is quite difficult, if it is possible,  to derive any covariance relation similar to \eqref{4-23-1} for other parameter $\alpha$ if it is not an even number  since we do not have the precise differential form. However, we can derive the  covariance relation for the integral operator $\tilde I_{\mathbb{S}^n, g_1, \alpha} (u)$ for any positive $\alpha$:

\begin{prop} Let $g_e=\sum_{i=1}^n d x_i \otimes d x_i$ be the standard flat metric on $\mathbb{R}^n$. Let $\mathcal{S}:\, x \in \mathbb{R}^n\to \xi\in \mathbb{S}^n\backslash(0,0,\cdots,-1)$ be the inverse of a stereographic projection, defined by
\begin{eqnarray*}
\xi^j:=\frac{2x^j}{1+|x|^2},\quad\text{for}~j=1,2,\cdots,n;\quad\xi^{n+1}:=\frac{1-|x|^2}{1+|x|^2}.
\end{eqnarray*}
For any positive $\alpha \ne n$, if $g_1=\phi^{\frac 4{n-\alpha}}g_e$, where  $\phi(x)=\big(\frac 2{1+|x|^2} \big )^{\frac {n-\alpha}{2}}$, then
\begin{equation}
\tilde I_{\mathbb{S}^n, g_1, \alpha} (u (\mathcal{S}))= \phi^{-1} \tilde I_{\mathbb{R}^n, g_0, \alpha} (\phi^{\frac{n+\alpha}{n-\alpha}} \cdot u)
\quad\forall u \in C^0({\mathbb{R}^n}).
\label{covariance}
\end{equation}
\label{prop4-1}
\end{prop}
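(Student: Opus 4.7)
The plan is to establish \eqref{covariance} by a direct change-of-variables computation under $\mathcal{S}$, using two classical identities. First, the \emph{chordal distance identity}
\[
|\mathcal{S}(x)-\mathcal{S}(y)| \;=\; \frac{2\,|x-y|}{\sqrt{(1+|x|^2)\,(1+|y|^2)}},
\]
which is the fundamental reason why the kernel $|\xi-\eta|^{\alpha-n}$ transforms conformally under $\mathcal{S}$. Second, the pull-back of the round volume element,
\[
\mathcal{S}^{\ast}\,dV_{g_0} \;=\; \left(\frac{2}{1+|y|^2}\right)^{\!n}\!dy,
\]
which, by the very definition of $\phi(y)=(2/(1+|y|^2))^{(n-\alpha)/2}$, is nothing but $\phi^{2n/(n-\alpha)}(y)\,dy=dV_{g_1}(y)$; equivalently, $g_1$ on $\mathbb{R}^n$ is precisely the pull-back of the round metric under $\mathcal{S}$.

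First I would fix $\xi=\mathcal{S}(x)$ and substitute $\eta=\mathcal{S}(y)$ in the defining integral of $\tilde I_{\mathbb{S}^n, g_1, \alpha}(u(\mathcal{S}))(\xi)$. The two identities above combine to contribute the factor
\[
\left(\frac{2}{\sqrt{(1+|x|^2)(1+|y|^2)}}\right)^{\!\alpha-n}\left(\frac{2}{1+|y|^2}\right)^{\!n}
\;=\; 2^{\alpha}\,(1+|x|^2)^{-(\alpha-n)/2}\,(1+|y|^2)^{-(n+\alpha)/2}.
\]
The crucial observation is that the two $(1+|\cdot|^2)$-factors \emph{decouple} into an $x$-only piece and a $y$-only piece, each of which is, up to a power of $2$, a prescribed power of $\phi$:
\[
(1+|x|^2)^{-(\alpha-n)/2}=2^{(n-\alpha)/2}\,\phi^{-1}(x),\qquad (1+|y|^2)^{-(n+\alpha)/2}=2^{-(n+\alpha)/2}\,\phi^{(n+\alpha)/(n-\alpha)}(y).
\]
Pulling the $x$-dependent factor $\phi^{-1}(x)$ outside the integral and observing that the powers of $2$ collapse via $2^{\alpha}\cdot 2^{(n-\alpha)/2}\cdot 2^{-(n+\alpha)/2}=1$ yields precisely the right-hand side of \eqref{covariance}.

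The main obstacle is nothing conceptual but purely a bookkeeping check of the various powers of $2$ and $(1+|\cdot|^2)$. The structural content of the identity is that, under stereographic change of variables, the kernel $|\xi-\eta|^{\alpha-n}$ splits into an $x$-dependent conformal factor (responsible for the conjugation by $\phi^{-1}$ on the output) and a $y$-dependent conformal factor (responsible for the weight $\phi^{(n+\alpha)/(n-\alpha)}$ on the input). This is precisely the transformation law one expects for the inverse of a conformally covariant operator of order $\alpha$, and it is obtained here without any appeal to a differential form---which would be unavailable for generic $\alpha$.
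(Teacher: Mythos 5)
Your proposal is correct and is essentially the paper's own proof: both rest on the chordal-distance identity $|\mathcal{S}(x)-\mathcal{S}(y)|^2=4|x-y|^2/\big((1+|x|^2)(1+|y|^2)\big)$ together with the identification of $dV_{g_1}$ with the pulled-back round volume element $\big(2/(1+|y|^2)\big)^n dy$, after which the conformal factors decouple into the $\phi^{-1}(x)$ prefactor and the $\phi^{(n+\alpha)/(n-\alpha)}(y)$ weight. Your power-of-$2$ and exponent bookkeeping checks out, so nothing further is needed.
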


\begin{proof}

Easy to check (or, see e.g.  \cite {Lieb1983, LL2001}): for  $x,y\in \mathbb{R}^n,\xi\in \mathbb{S}^n,$
\begin{eqnarray}\label{lieb1}
|\mathcal{S}(x)-\mathcal{S}(y)|=\big[\frac{4|x-y|^2}{(1+|x|^2)(1+|y|^2)}\big]^\frac12,
\end{eqnarray}
So,
\begin{eqnarray*}
\tilde I_{\mathbb{S}^n, g_1, \alpha} (u)&=& \int_{\mathbb{S}^n}|\mathcal{S}(x)-\mathcal{S}(y)|^{\alpha-n} u(\mathcal{S}(y))d V_{g_1}\\
&=&\big(\frac 2{1+|x|^2} \big )^{\frac {\alpha-n}{2}}\int_{\mathbb{R}^n}|x-y|^{\alpha-n} u(y)\cdot \big(\frac 2{1+|y|^2} \big )^{\frac {\alpha+n}{2}}d V_{g_0}\\
& =& \phi^{-1} \tilde I_{\mathbb{R}^n, g_0, \alpha} (\phi^{\frac{n+\alpha}{n-\alpha}} \cdot u).
\end{eqnarray*}
\end{proof}

Observe, in identity \eqref{lieb1}, that $G^{g_0} (x,y):=c_{n,2}|x-y|^{n-2}$ is the Green's function for conformal Lapalacian operator $-\Delta_x$ and $G^{g_1}(\mathcal{S}(x), \mathcal{S}(y)):=c_{n,2}|\mathcal{S}(x)-\mathcal{S}(y)|^{n-2}$ is the Green's function for conformal Lapalacian operator $L_{g_1}=-\Delta_{g_1} +c(n) R_{g_1}$  for $g_1= \phi^{\frac 4{n-\alpha}} g_0,$ where and throughout the rest of this section $c(n)=\frac{n-2}{4(n-1)}$ and $R_{g_1}$ is the scalar curvature under metric $g_1$. It turns out, this type of identity holds for general manifolds with positive scalar curvature.

\begin{prop} For a given compact Riemannian manifold $(M^n, g_0)$ (for $n\ne 2)$, let $R_{g_0}$ be its scalar curvature. Let $g_1=\phi^{\frac {4}{n-2}}g_0$ be a conformal metric with $R_{g_1}>0$ being its scalar curvature. If the Green's function for the conformal Laplacian operator $L_{g_0}=-\Delta_{g_0}+c(n)R_{g_0}$ is $G^{g_0}(y, x)$, then the Green's function for the conformal Laplacian operator $L_{g_1}=-\Delta_{g_1}+c(n)R_{g_1}$ is given by
\begin{equation}
G^{g_1}(y, x)=\phi^{-1}(y)\phi^{-1}(x) G^{g_0}(y, x).
\label{Green}
\end{equation}
\label{green}
\end{prop}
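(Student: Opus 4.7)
My plan is to verify directly, by pairing against an arbitrary smooth test function, that the kernel $\phi^{-1}(y)\phi^{-1}(x)G^{g_0}(y,x)$ satisfies the defining property of the Green's function for $L_{g_1}$. The entire argument will rest on the classical conformal covariance of the conformal Laplacian,
$$L_{g_1}\psi = \phi^{-(n+2)/(n-2)}\,L_{g_0}(\phi\psi), \qquad \psi\in C^2(M),$$
together with the transformation law $dV_{g_1}=\phi^{2n/(n-2)}dV_{g_0}$ for the Riemannian volume form. I will also use that $L_{g_0}$ is self-adjoint and that $G^{g_0}(y,x)$ is symmetric, so that its defining property takes the weak form $\int_M G^{g_0}(y,x)L_{g_0}\psi(y)\,dV_{g_0}(y)=\psi(x)$ for every smooth $\psi$.

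To execute, I fix $x\in M$ and a test function $\psi\in C^\infty(M)$, substitute the proposed kernel, and factor the $y$-independent constant $\phi^{-1}(x)$ outside the integral. The three contributions of $\phi(y)$ in the integrand---one from the kernel, one from the covariance identity applied to $L_{g_1}\psi(y)$, and one from $dV_{g_1}(y)$---combine with total exponent
$$-1-\frac{n+2}{n-2}+\frac{2n}{n-2}=0,$$
so the integrand collapses to $G^{g_0}(y,x)\,L_{g_0}(\phi\psi)(y)\,dV_{g_0}(y)$. The defining property of $G^{g_0}$ then returns $(\phi\psi)(x)$, and multiplying by the prefactor $\phi^{-1}(x)$ yields $\psi(x)$, exactly the identity characterizing $G^{g_1}(\cdot,x)$ as the Green's function for $L_{g_1}$.

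Finally, to promote this from an equality of solution operators to an identification of Green's functions, I will invoke the hypothesis $R_{g_1}>0$: it forces $L_{g_1}$ to be coercive and hence invertible, so its Green's function is unique, and the explicit kernel produced above must coincide with $G^{g_1}(y,x)$. The main obstacle is purely bookkeeping---tracking which exponent of $\phi(y)$ comes from which of the three sources and checking the arithmetic cancellation---rather than anything analytic, since the existence and symmetry of $G^{g_0}$ are already part of the hypotheses.
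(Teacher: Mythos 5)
Your proposal is correct and is essentially the paper's own argument: both rest solely on the conformal covariance $L_{g_1}\psi=\phi^{-\frac{n+2}{n-2}}L_{g_0}(\phi\psi)$ and the volume-form change $dV_{g_1}=\phi^{\frac{2n}{n-2}}dV_{g_0}$, with the same exponent cancellation. The only (cosmetic) difference is that the paper verifies $L_{g_1}\bigl(\phi^{-1}I_{g_0}(\phi^{\frac{n+2}{n-2}}u)\bigr)=u$ and reads off the kernel of the resulting inverse operator, whereas you pair the candidate kernel against $L_{g_1}\psi$; the two computations are transposes of one another via self-adjointness.
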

\begin{proof}For any  function $u \in C^2(M^n)$, denote
$$I_{g_0}(u)=\int_{M^n}G^{g_0}(y, x)u(y) dV_{g_0}(y), \ \ \ \ I_{g_1}(u)=\int_{M^n}G^{g_1}(y, x)u(y) dV_{g_1}(y).
$$
Using the conformal covariance for the conformal Laplacian operator, we have,
\begin{eqnarray*}
L_{g_1}\big ( \phi^{-1}I_{g_0}(\phi^{\frac{n+2}{n-2}} u) \big)&=& \phi^{-\frac{n+2}{n-2}}L_{g_0}\big ( I_{g_0}(\phi ^{\frac{n+2}{n-2}} u) \big)\\
&=&u(x).
\end{eqnarray*}
Thus
\begin{eqnarray*}
\int_{M^n}G^{g_1}(y, x))u(y) dV_{g_1}(y)&=& \phi^{-1}(x)I_{g_0}(\phi^{\frac{n+2}{n-2}} u)\\
&=&\int_{M^n}\phi^{-1}(x)G^{g_0}(y, x))u(y) \phi^{\frac{n+2}{n-2}}(y) dV_{g_0}(y)\\
&=&\int_{M^n}\phi^{-1}(x)G^{g_0}(y, x))\phi^{-1}(y)u(y) dV_{g_1}(y).
\end{eqnarray*}
The proposition follows from the above.
\end{proof}

 Based on this proposition, for any compact Riemannian manifold $(M^n, g_0)$ ($n\ne 2$) with positive scalar curvature $R_{g_0}$ and parameter $\alpha \ne n+2k$ for $k=0, 1, \cdots$, we may introduce $\alpha-$ curvature $Q_{\alpha, g}$ under the conformal metric $g=\phi^{\frac 4{n-\alpha}}g_0$ as follows.

 Let $G^{g_0}(y, x)$ be the Green's function for the (second order) conformal Laplacian operator $-\Delta_{g_0} +c(n)R_{g_0}$. Define
  \begin{equation}\label{new_operator}
   I_{M^n, g_0, \alpha} (f)(x)=\int_{\mathbb{M}^{n}}[G^{g_0}(y, x)]^{\frac {\alpha-n}{2-n}} f(y) dV_{g_0}.
  \end{equation}
Similar operators and related proposal were discussed in \cite{DZ1-1}, however this part was cut in the journal version \cite{DZ1paper} on the request of the referee.
 It follows from Proposition \ref{green} that  $I_{M^n, g_0, \alpha}$ has the following conformal covariance property:
\begin{thm}
For any positive $\alpha \ne n$, if $g_1=\phi^{\frac 4{n-\alpha}}g_0$, then
\begin{equation}
I_{M^n, g_1, \alpha} (u)= \phi^{\frac{\alpha-n}{n-2}} I_{M^n, g_0, \alpha} (\phi^{\frac{2n}{n-\alpha}+\frac{\alpha-n}{n-2}} \cdot u),
\quad\forall u \in C^0({\mathbb{M}^n}).
\label{covariance-1}
\end{equation}
\label{covariance_thm}
\end{thm}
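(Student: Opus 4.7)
The plan is to verify \eqref{covariance-1} by a direct calculation that assembles Proposition \ref{green} (the transformation law for the conformal Laplacian Green's function) with the standard conformal change-of-volume formula. The only subtlety is a preliminary reparametrization of the conformal factor, necessary because Proposition \ref{green} is stated for the conformal weight $4/(n-2)$, whereas the defining relation $g_1=\phi^{4/(n-\alpha)}g_0$ in the theorem uses the weight $4/(n-\alpha)$.

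First I would set $\psi:=\phi^{(n-2)/(n-\alpha)}$, so that the conformal change $g_1=\phi^{4/(n-\alpha)}g_0$ is rewritten as $g_1=\psi^{4/(n-2)}g_0$, i.e., in exactly the form required by Proposition \ref{green}. That proposition then yields
\begin{equation*}
G^{g_1}(y,x)=\psi^{-1}(y)\psi^{-1}(x)G^{g_0}(y,x)=\phi^{-\frac{n-2}{n-\alpha}}(y)\,\phi^{-\frac{n-2}{n-\alpha}}(x)\,G^{g_0}(y,x).
\end{equation*}
Raising both sides to the exponent $\tfrac{\alpha-n}{2-n}$ appearing in the definition \eqref{new_operator} of $I_{M^n,g,\alpha}$ then expresses $[G^{g_1}(y,x)]^{(\alpha-n)/(2-n)}$ as an $x$-dependent factor times a $y$-dependent factor times $[G^{g_0}(y,x)]^{(\alpha-n)/(2-n)}$.

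Next I would combine with the standard volume-form change $dV_{g_1}=\phi^{\frac{2n}{n-\alpha}}dV_{g_0}$ and substitute into \eqref{new_operator} applied to $g_1$. Pulling the $x$-dependent factor outside the integral, and absorbing the $y$-dependent factor together with $\phi^{2n/(n-\alpha)}$ into the argument of $I_{M^n,g_0,\alpha}$, produces precisely the reweighted right-hand side of \eqref{covariance-1}. The remainder is bookkeeping of exponents: the product
\[
-\tfrac{n-2}{n-\alpha}\cdot\tfrac{\alpha-n}{2-n}
\]
determines the prefactor in $x$, and adding this same exponent to $\tfrac{2n}{n-\alpha}$ gives the $y$-exponent inside $I_{M^n,g_0,\alpha}$.

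I anticipate no substantive obstacle: the identity is essentially forced by Proposition \ref{green} once the two different conformal weights are reconciled. The one place to be careful is sign-keeping when raising the Green's function identity to the (negative or positive) power $(\alpha-n)/(2-n)$, which flips sign according to whether $\alpha>n$ or $\alpha<n$. The statement is to be read uniformly in both ranges (excluding the excluded values $\alpha=n+2k$ where $G^{g_0}$ to that power may fail to define a reasonable kernel), so working with $G^{g_0}$ as a pointwise positive Green's function on a manifold with $R_{g_0}>0$ is enough to justify the exponentiation.
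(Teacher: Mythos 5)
Your reparametrization $\psi=\phi^{(n-2)/(n-\alpha)}$ is the right way to make Proposition \ref{green} applicable, and up to that point the setup is sound. The gap is in the final bookkeeping, which you assert but do not carry out: the exponent you single out,
\[
-\tfrac{n-2}{n-\alpha}\cdot\tfrac{\alpha-n}{2-n},
\]
equals $-1$ identically (since $(n-\alpha)(2-n)=(\alpha-n)(n-2)$), not $\tfrac{\alpha-n}{n-2}$. Consequently the computation you describe yields
\[
I_{M^n,g_1,\alpha}(u)=\phi^{-1}\,I_{M^n,g_0,\alpha}\bigl(\phi^{\frac{2n}{n-\alpha}-1}\cdot u\bigr)=\phi^{-1}\,I_{M^n,g_0,\alpha}\bigl(\phi^{\frac{n+\alpha}{n-\alpha}}\cdot u\bigr),
\]
which agrees with the right-hand side of \eqref{covariance-1} only when $\tfrac{\alpha-n}{n-2}=-1$, i.e.\ $\alpha=2$. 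For instance, with $n=3$, $\alpha=4$ your route gives $\phi^{-1}I_{M^3,g_0,4}(\phi^{-7}u)$, whereas \eqref{covariance-1} asserts $\phi\, I_{M^3,g_0,4}(\phi^{-5}u)$. So the argument, executed faithfully, proves the direct analogue of Proposition \ref{prop4-1} rather than the stated identity; the sentence claiming it ``produces precisely the reweighted right-hand side of \eqref{covariance-1}'' is where the proof fails.

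For comparison, the paper's own proof is a two-line substitution that inserts $G^{g_1}(y,x)=\phi^{-1}(y)\phi^{-1}(x)G^{g_0}(y,x)$ with $\phi$ taken to be the weight-$\frac{4}{n-\alpha}$ conformal factor of the theorem, i.e.\ it applies the formula of Proposition \ref{green} \emph{without} the reparametrization you correctly insist on; only under that reading does one land on the exponents in \eqref{covariance-1}. The two readings are incompatible for $\alpha\ne 2$, and yours is the literal one (and the one consistent with the sphere case, Proposition \ref{prop4-1}, and with the normalization \eqref{3-7}). You should either flag the mismatch between \eqref{covariance-1} and Proposition \ref{green} explicitly, or state and prove the identity your computation actually gives; you cannot simultaneously use the reparametrized Green's function law and arrive at the exponents of \eqref{covariance-1}.
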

\begin{proof}
\begin{eqnarray*}
I_{M^n, g_1, \alpha} (u)&=&\int_{\mathbb{M}^{n}}[G^{g_1}(y, x)]^{\frac {\alpha-n}{2-n}} u(y) dV_{g_1}\\
&=&\phi^{\frac{\alpha-n}{n-2}} \int_{\mathbb{M}^{n}}[G^{g_0}(y, x)]^{\frac {\alpha-n}{2-n}}\phi^{\frac{2n}{n-\alpha}+\frac{\alpha-n}{n-2}} u(y) d V_{g_0}.
\end{eqnarray*}
\end{proof}

For a given compact Riemannian manifold $(M^n, g_0)$ ($n \ne 2)$ with positive scalar curvature, the $\alpha-$ curvature $Q_{\alpha,{g_1}}$ (for $\alpha \ne n)$ under the conformal curvature $g_1=\phi^{\frac 4{n-\alpha}}g_0$ is then defined as the  function which satisfies
 \begin{equation}\label{3-7}
\phi(x)= \int_{\mathbb{M}^{n}}[G^{g_0}(y, x)]^{\frac {\alpha-n}{2-n}} {Q_{\alpha,{g_1}} (y) \phi^{\frac{n+\alpha}{n-\alpha}}(y)}dV_{g_0}= I_{M^n, g_0, \alpha} ({Q_{\alpha,{g_1}} \phi^{\frac{n+\alpha}{n-\alpha}}}).
\end{equation}
It is clear that $Q_{2, g_1}$ is in fact the scalar curvature. Is $Q_{4. g_1}$ the $Q-$ curvature (may up to a universal constant)? This is a topic we will study in the future. On the other hand,  if  $I_{M^n, g_0, \alpha}$ is invertible, then its inverse operator may yield a conformal differential operator with higher order ($\alpha$-th order). It is interesting to seek the relation between our integral operator defined in \eqref{new_operator} with  GJMS operator\footnote{While the current paper is circulated, I was called attention to Hang and Yang's recent paper \cite{HY2014-2}, and realized that this question is answered by Proposition 1.1 in their paper.}. For a given compact Riemannian manifold $(M^n, g_0)$ ($n \ne 2)$ with positive scalar curvature, the corresponding Yamabe type problem for $\alpha \ne n$  can be formulated as: whether there is a conformal metric $g=\phi^{\frac 4{n-\alpha}}g_0$ so that the $Q_{\alpha,{g}}$ under this metric is constant. This problem will be addressed when we extend the classic sharp Hardy-Littlewood-Sobolev inequality of Lieb on general compact Riemannian manifolds \cite{HZ2014}.

\medskip

\noindent{\bf ACKNOWLEDGMENT.} It's my great pleasure to thank Professor P. Yang who brought my attention to the project of understanding curvature equations with negative power ten more years ago while we were working on
 the sharp Sobolev inequality for Paneitz operator on $\mathbb{S}^3$, and thank him  for sending me their recent preprints \cite{HY2014-1}-\cite{HY2014-2}. Only after we obtained
 the sharp reversed Hardy-Littlewood-Sobolev inequality a couple of years ago in \cite{DZ2}, we realized
 the deepness, richness of the project. Thanks also go to Professor  A. Chang for the conversation on early existence results on the classic prescribing curvature equation.  This work  is partially supported by a collaboration grant from Simons Foundation.


\end{document}